\documentclass[a4paper,11pt]{article}
\textwidth380pt
\hoffset-40pt
\voffset+0pt
\headsep-20pt
\textheight510pt

\usepackage{amsmath, amsfonts, amscd, amssymb, amsthm, enumerate, xypic}

\def\bfB{\mathbf{B}}
\def\bfC{\mathbf{C}}
\def\ad{\text{ad}}

\renewcommand{\epsilon}{\varepsilon}

\newcommand{\Mat}{\operatorname{M}}

\newcommand{\GL}{\operatorname{GL}}

\newcommand{\End}{\operatorname{End}}

\newcommand{\rk}{\operatorname{rk}}

\renewcommand{\setminus}{\smallsetminus}

% Ensembles classiques de nombres.

\def\F{\mathbb{F}}

\def\R{\mathbb{R}}
\def\C{\mathbb{C}}

\def\N{\mathbb{N}}
\def\Z{\mathbb{Z}}

% Majuscules calligraphiées

\def\calA{\mathcal{A}}

\def\calC{\mathcal{C}}

% Intervalles d'entiers

\def\lcro{\mathopen{[\![}}
\def\rcro{\mathclose{]\!]}}

\theoremstyle{definition}

\theoremstyle{plain}
\newtheorem{theo}{Theorem}

\newtheorem{lemma}[theo]{Lemma}

\theoremstyle{plain}

\theoremstyle{remark}
\newtheorem{Rems}{Remarks}
\newtheorem{Rem}[Rems]{Remark}

\title{The space of all $p$-th roots of a nilpotent complex matrix is path-connected}
\author{Cl\'ement de Seguins Pazzis\footnote{Universit\'e de Versailles Saint-Quentin-en-Yvelines, Laboratoire de Math\'ematiques
de Versailles, 45 avenue des Etats-Unis, 78035 Versailles cedex, France, dsp.prof@gmail.com}}

\begin{document}

\thispagestyle{plain}

\maketitle

\begin{abstract}
Let $p$ be a positive integer and $A$ be a nilpotent complex matrix. We prove that the set of all
$p$-th roots of $A$ is path-connected.
\end{abstract}

\vskip 2mm
\noindent
\emph{AMS Classification:} 15A24; 54D05.

\vskip 2mm
\noindent
\emph{Keywords:} Matrices, Nilpotency, Path-connectedness, Jordan normal form.

% Dernière relecture à effectuer.

\section{Introduction}

Let $U$ be an open subset of the field $\C$ of complex numbers,
$f : U \rightarrow \C$ be an analytic function and $n$ be a positive integer. Given a matrix $A \in \Mat_n(\C)$,
it is natural to ask whether the matrix equation $f(X)=A$, with unknown $X \in \Mat_n(\C)$, has
at least one solution. By using the fact that $X$ commutes with $f(X)$, and by using the characteristic subspaces of $A$,
this problem can be reduced to the one of deciding whether the equation $g(X)=N$ has a solution, where
$N$ is a given \emph{nilpotent} matrix, and $g$ is a given analytic function.

There is a (not very satisfying) answer to that question, and we shall recall it in short notice. Given a nilpotent matrix $A \in \Mat_n(\C)$ and a positive integer $k$, we denote by $m_k(A)$ the number of Jordan cells of size $k$ in the Jordan normal form of $A$. The sequence
$(m_k(A))_{k \geq 1}$ is called the (Jordan) \textbf{profile} of $A$. It belongs to the additive semigroup
$\N^{(\N^*)}$ of all sequences of non-negative integers with finite support and indexed over the positive integers
(here, $\N$ denotes the set of all non-negative integers, and $\N^*$ the one of all positive integers).
More generally, any element of $\N^{(\N^*)}$ is called a \textbf{profile}. Two nilpotent matrices are similar if and only if they
have the same Jordan profile. Throughout the article, profiles will be seen as elements of the abelian group $\Z^{(\N^*)}$
of all sequences of integers with finite support.

Given $k \in \N^*$, we denote by $J_k \in \Mat_k(\C)$ the Jordan cell of size $k$
(i.e.\ the matrix of $\Mat_k(\C)$ in which the entry at the $(i,i+1)$-spot equals $1$ for all $i \in \lcro 1,k-1\rcro$, and all the other entries equal $0$), and we denote its profile by $e_k$ (so that $(e_k)_i=1$ if $i=k$, and $(e_k)_i=0$ otherwise).
We convene that $J_0$ is the $0$-by-$0$ matrix and that $e_0$ is the zero sequence in $\N^{(\N^*)}$.

The following result is folklore:

\begin{lemma}\label{powerJordan}
Let $k$ and $p$ be positive integers.
Then $J_k^p$ is similar to the direct sum of
$k-pa$ copies of $J_{a+1}$ and of $p(a+1)-k$ copies of $J_a$, for every non-negative integer $a$ such that
$pa \leq k \leq p(a+1)$ (in particular, this holds when $a$ is the quotient of $k$ modulo $p$).
\end{lemma}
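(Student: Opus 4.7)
The plan is to exhibit an explicit direct sum decomposition of $\C^k$ into $J_k^p$-stable subspaces on which $J_k^p$ acts as Jordan cells of the desired sizes. Working in the standard basis $(e_1,\dots,e_k)$ in which $J_k$ is the shift $e_{i+1}\mapsto e_i$ (with $e_1\mapsto 0$), the $p$-th power $J_k^p$ acts as $e_{i+p}\mapsto e_i$ for $i+p\leq k$ and sends $e_1,\dots,e_p$ to zero, so the orbits of $J_k^p$ on the basis vectors are naturally indexed by residues modulo $p$.

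Concretely, for each $r\in\{1,\dots,p\}$ I would set
\[
V_r:=\Vect\bigl(e_r,\,e_{r+p},\,e_{r+2p},\dots\bigr),
\]
keeping only indices at most $k$. Each $V_r$ is $J_k^p$-stable, and $\C^k=\bigoplus_{r=1}^{p}V_r$ since the index sets partition $\{1,\dots,k\}$. Reading the basis of $V_r$ in decreasing order of index, the restriction of $J_k^p$ to $V_r$ is conjugate to $J_{\ell_r}$, where $\ell_r$ is the length of the chain, namely $\ell_r=\lfloor(k-r)/p\rfloor+1$.

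Finally I would do the index bookkeeping. Writing $k=pa+s$ with $a=\lfloor k/p\rfloor$ and $0\leq s\leq p-1$, one immediately checks that $\ell_r=a+1$ for $1\leq r\leq s$ and $\ell_r=a$ for $s<r\leq p$. This produces $s=k-pa$ copies of $J_{a+1}$ and $p-s=p(a+1)-k$ copies of $J_a$, which is the stated decomposition for the quotient $a$. When $k$ is a multiple of $p$, the other admissible value $a=k/p$ yields the same matrix with the convention that $J_0$ contributes nothing, so the statement as formulated holds uniformly.

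I do not expect a real obstacle: the argument is essentially combinatorial once the chain decomposition is written down. The only thing to watch is the boundary behaviour, namely the cases $s=0$, $a=0$, and $k=0$, where some of the multiplicities $k-pa$ or $p(a+1)-k$ vanish; these are handled cleanly by the conventions that $J_0$ is the empty matrix and $e_0$ is the zero profile, so no separate treatment is necessary.
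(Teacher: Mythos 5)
Your argument is correct; the paper itself offers no proof of this lemma (it is stated as folklore), and your chain decomposition into the subspaces $V_r$ indexed by residues modulo $p$ is exactly the standard argument one would supply. The only blemish is a harmless slip at the end: having computed with $a=\lfloor k/p\rfloor$, which equals $k/p$ when $p$ divides $k$, the ``other admissible value'' is $a=k/p-1$, not $a=k/p$; the verification that both choices yield $p$ copies of $J_{k/p}$ goes through exactly as you describe.
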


From there, one proves (see Appendix A for details) that, given a nilpotent matrix $A \in \Mat_n(\C)$, the equation $f(X)=A$
has a solution if and only if the profile of $A$ belongs to the sub-semigroup of $\N^{(\N^*)}$ generated by the profiles
of the form $r\cdot e_{a+1}+(p-r)\cdot e_a$ -- where $a$ is a non-negative integer, $p$ is the finite multiplicity of some zero of $f$, and
$r \in \lcro 0,p\rcro$ -- and the profile $e_1$ if some zero of $f$ has infinite multiplicity (i.e.\ $f$ is constant on the
connected component of that zero). In particular, if $f$ has at least one simple zero then the equation
$f(X)=A$ has a solution for every nilpotent matrix $A$.

The above characterization is not very convenient though. In very special cases, one can formulate an equivalent one
that can easily be tested: a nilpotent matrix $A$ has a $p$-th root
if and only if, for all $k \in \N^*$, the integer $p-m_k(A)$ is less than or equal to the remainder of
$\underset{j=k+1}{\overset{+\infty}{\sum}} m_j(A)$ modulo $p$ \emph{provided that this remainder is non-zero}
(for example, if $p=2$ this means that $m_k(A)>0$ whenever $\underset{j=k+1}{\overset{+\infty}{\sum}} m_j(A)$ is odd).
Moreover this result holds not only over the field of complex numbers, but over any skew field.
If $f$ has exactly two zeroes, one with multiplicity $2$ and one with multiplicity $3$ (e.g. if $f : z \mapsto z^3(z-1)^2$),
then, given a nilpotent matrix $A \in \Mat_n(\C)$, the equation $f(X)=A$ has a solution if and only if
there is no pair $(k,l)$ of positive integers for which $m_k(A)=m_{k+2l}(A)=0$ and $m_{k+i}(A)=1$ for all $i \in \lcro 1,2l-1\rcro$.
We leave these results as exercises for the reader.

\vskip 3mm
Here, we will stick to the equation $X^p=A$ for a fixed nilpotent complex matrix $A$ and a fixed positive integer $p$.
When this equation has a solution, we are interested in the topological structure of its solution set
$A^{1/p}$, i.e.\ the set of all $p$-th roots of $A$. Note that all the matrices in $A^{1/p}$ are nilpotent.

A very ambitious goal is to understand the homotopy type of $A^{1/p}$. As a first step towards that goal,
we will consider here its path-connectedness.
Here is our main theorem:

\begin{theo}\label{maintheo}
Let $p$ be a positive integer and $A$ be a nilpotent complex matrix. Then the set $A^{1/p}$
is path-connected.
\end{theo}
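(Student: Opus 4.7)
The plan is to exploit a natural orbit decomposition of $A^{1/p}$ under the centralizer of $A$, and to connect the orbits through explicit deformations. Since conjugation by any $P \in \GL_n(\C)$ is a homeomorphism of $\Mat_n(\C)$ sending $A^{1/p}$ bijectively onto $(PAP^{-1})^{1/p}$, I would first reduce to the case where $A$ is in Jordan normal form.

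Next I would prove path-connectedness within each similarity class. The group $C := \operatorname{Cent}(A)\cap\GL_n(\C)$ acts on $A^{1/p}$ by conjugation, and two roots of $A$ lie in the same $C$-orbit if and only if they have the same Jordan profile: indeed, if $Q \in \GL_n(\C)$ satisfies $Q X_1 Q^{-1} = X_2$ with $X_1^p = X_2^p = A$, then $QAQ^{-1}=A$, i.e.\ $Q \in C$. Now $C$ is the unit group of the finite-dimensional $\C$-algebra $\operatorname{Cent}(A)$, i.e.\ the complement in a complex affine space of the hypersurface $\{\det = 0\}$; such a complement has real codimension two and is therefore path-connected. Any continuous path $\gamma$ from $I$ to $P$ inside $C$ then induces a continuous path $t \mapsto \gamma(t) X \gamma(t)^{-1}$ in $A^{1/p}$ between $X$ and $P X P^{-1}$, so every $C$-orbit is path-connected.

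It remains to link the distinct $C$-orbits. By Lemma~\ref{powerJordan}, these orbits are indexed by the ``admissible'' profiles $\pi$ for which $\pi^p$ (the formal blockwise $p$-th power defined by the lemma) equals the profile of $A$. I would aim to: (i) show that any two admissible profiles differ by a short sequence of \emph{elementary moves}, each modifying only a few $X$-blocks at a time -- the archetypal example being a swap $(J_k,J_l) \leftrightarrow (J_{k'},J_{l'})$ of a pair of $X$-blocks whose $p$-th powers contribute the same multiset of $A$-blocks; and (ii) realize each elementary move as an explicit $1$-parameter family $(X(t))_{t \in [0,1]} \subseteq A^{1/p}$ whose endpoints sit in the two similarity classes involved. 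The family $X(t)$ would be built by a continuous deformation of the Jordan basis of $X$ on the affected subspace, compatible with a fixed Jordan basis of $A$. Path-connectedness of $A^{1/p}$ then follows by alternately concatenating intra-orbit paths (from the centralizer argument) with the elementary-move paths.

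The main obstacle will be step (ii): the constraint $X(t)^p = A$ is highly rigid, and naive interpolations such as straight lines fail to preserve it. For each type of elementary move one must design a specific continuous family of bases and then verify, by a direct and delicate computation, that the $p$-th power remains equal to $A$ throughout. A subsidiary combinatorial difficulty is step (i) -- showing that a manageable catalogue of elementary moves suffices to connect any two admissible profiles, a point resting on a careful reading of Lemma~\ref{powerJordan}.
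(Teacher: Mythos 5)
Your overall architecture coincides with the paper's: partition $A^{1/p}$ by Jordan profile, prove each piece is path-connected via the centralizer (your argument here is exactly the paper's proof of Lemma~\ref{celllemma}, and it is correct), then link the pieces by elementary moves on pairs of Jordan blocks whose $p$-th powers contribute the same profile, and finally check that these moves connect all admissible profiles. However, the two steps you yourself flag as obstacles are precisely the load-bearing parts of the argument, and your proposal does not contain the ideas needed to carry them out, so as it stands there is a genuine gap.

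For your step (ii), the missing idea is a two-stage construction. The paper (Lemma~\ref{basicpathlemma}) does \emph{not} directly produce a family with $X(t)^p$ literally constant. It first deforms $u=J_k\oplus J_l$ by redefining the action on the bottom vector of the shorter chain, $u_t(y_1)=(1-t)y_2+t\,x_1$; for $t\in(0,1)$ this stays in the similarity class of $J_k\oplus J_l$ and at $t=1$ lands in that of $J_{k+1}\oplus J_{l-1}$, so that $t\mapsto u_t^p$ moves only \emph{within a fixed similarity class} (using Lemma~\ref{Jordanppowersimilarity}). One then invokes the fact that $P\mapsto PU_0^pP^{-1}$ is a fibration onto the similarity class to lift $t\mapsto U_t^p$ to a path $q$ in $\GL_N(\C)$ and replace $U_t$ by $q(t)^{-1}U_tq(t)$, which makes the $p$-th power genuinely constant. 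Without this path-lifting trick (or an equivalent device), ``a direct and delicate computation'' is unlikely to succeed: the rigidity of $X(t)^p=A$ that you correctly identify is exactly why a correction by a moving conjugation is needed. For your step (i), the relevant moves are not arbitrary swaps $(J_k,J_l)\leftrightarrow(J_{k'},J_{l'})$ but the specific unit shifts $e_k+e_l\mapsto e_{k+1}+e_{l-1}$ with $pa\le k<l\le p(a+1)$, and the connectivity of the resulting graph on admissible profiles is a nontrivial combinatorial statement; the paper proves it (Lemma~\ref{lastlemma}) by induction on the size $\sum_k k\,m_k$, first stripping off any cell size common to both profiles and otherwise pushing the largest cell of one profile upward one step at a time within its window $\lcro pa,p(a+1)\rcro$. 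You would need to supply an argument of this kind; asserting that ``a manageable catalogue of elementary moves suffices'' is not yet a proof.
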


The case $p=1$ is straightforward.
In the remainder of this section, we fix an integer $p>1$ and a nilpotent matrix $A \in \Mat_n(\C)$.
Given $m \in \N^{(\N^*)}$, we denote by $A^{1/p}_m$ the subset of all $N \in A^{1/p}$ with profile $m$
(of course this subset may be empty). We denote by $P_p(A)$ the set of all profiles $m$ such that
$A^{1/p}_m$ is non-empty.
Hence, the family $(A^{1/p}_m)_{m \in P_p(A)}$ yields a partition of $A^{1/p}$.

Two profiles $m$ and $m'$ are called \textbf{$p$-adjacent}, and we write $m \underset{p}{\sim} m'$, when
there exist non-negative integers $a,k,l$ such that $pa \leq k<l \leq p(a+1)$
and
$$m-m'=\pm(e_k+e_l-e_{k+1}-e_{l-1}).$$
Finally, we denote by $\calA_p(A)$ the set of all pairs $\{m,m'\}$ of distinct $p$-adjacent elements of $P_p(A)$.
Thus, we have defined a non-oriented graph $(P_p(A),\calA_p(A))$.

The definition of $p$-adjacency is motivated by the following basic result:

\begin{lemma}\label{Jordanppowersimilarity}
Let $a,k,l$ be integers such that $0 \leq pa \leq k<l \leq p(a+1)$.
Then the matrices $(J_k \oplus J_l)^p$ and $(J_{k+1} \oplus J_{l-1})^p$ are similar.
\end{lemma}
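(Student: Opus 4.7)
The plan is to apply Lemma \ref{powerJordan} to each of the four Jordan blocks involved, using the same value of $a$ throughout, and then verify that the resulting Jordan profiles coincide.

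First I would observe that, since powers of block-diagonal matrices are computed block-wise, one has $(J_k \oplus J_l)^p = J_k^p \oplus J_l^p$ and $(J_{k+1} \oplus J_{l-1})^p = J_{k+1}^p \oplus J_{l-1}^p$, so it suffices to compare Jordan profiles of these two direct sums. The crucial structural observation is that the hypothesis $pa \leq k < l \leq p(a+1)$ forces $pa < l$ (hence $pa \leq l$), $pa < k+1 \leq p(a+1)$, and $pa \leq l-1 < p(a+1)$, so the same non-negative integer $a$ satisfies the bracketing condition of Lemma \ref{powerJordan} simultaneously for each of the four indices $k$, $l$, $k+1$, $l-1$.

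Next I would apply Lemma \ref{powerJordan} with this common value $a$ to each block. This gives:
\begin{align*}
J_k^p &\sim (k-pa)\,J_{a+1} \oplus (p(a+1)-k)\,J_a, \\
J_l^p &\sim (l-pa)\,J_{a+1} \oplus (p(a+1)-l)\,J_a, \\
J_{k+1}^p &\sim (k+1-pa)\,J_{a+1} \oplus (p(a+1)-k-1)\,J_a, \\
J_{l-1}^p &\sim (l-1-pa)\,J_{a+1} \oplus (p(a+1)-l+1)\,J_a,
\end{align*}
where $nJ$ denotes the direct sum of $n$ copies of $J$. Summing gives, in both cases, exactly $(k+l-2pa)$ copies of $J_{a+1}$ and $(2p(a+1)-k-l)$ copies of $J_a$: the $\pm 1$ shifts at $k$ and $l$ cancel.

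Hence $J_k^p \oplus J_l^p$ and $J_{k+1}^p \oplus J_{l-1}^p$ have the same Jordan profile, so they are similar, which is the desired conclusion. There is no real obstacle here; the only thing to be careful about is checking that a single $a$ works for all four indices at once, so that one may legitimately compare the profiles using the same decomposition type $(J_a, J_{a+1})$ on each side.
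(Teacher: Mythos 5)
Your proposal is correct and is essentially the paper's own argument: the paper also applies Lemma \ref{powerJordan} with the common value $a$ to each of the four cells (writing the counts via the remainders $r=k-pa$ and $s=l-1-pa$) and observes that the $\pm 1$ shifts cancel in the total profile. Your explicit check that the single $a$ satisfies the bracketing condition for all four indices is a worthwhile detail that the paper leaves implicit.
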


\begin{proof}
Denote respectively by $r$ and $s$ the remainders of $k$ and $l-1$ modulo $p$.
By Lemma \ref{powerJordan}, we find that $(J_k \oplus J_l)^p$ is similar to the direct sum of $r+(s+1)$ copies of $J_{a+1}$ and of
$(p-r)+(p-s-1)$ copies of $J_a$.
Likewise, $(J_{k+1} \oplus J_{l-1})^p$ is similar to the direct sum of $(r+1)+s$ copies of $J_{a+1}$ and of
$(p-r-1)+(p-s)$ copies of $J_a$. The claimed result ensues.
\end{proof}

We are now able to state the three steps of our proof of Theorem \ref{maintheo}:

\begin{lemma}\label{celllemma}
Let $m \in P_p(A)$. Then the space $A^{1/p}_m$ is path-connected.
\end{lemma}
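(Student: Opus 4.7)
The plan is to exhibit $A^{1/p}_m$ as a single orbit of a path-connected group acting by conjugation, which makes path-connectedness immediate.

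First I would observe that any two elements $N_1, N_2 \in A^{1/p}_m$ share the Jordan profile $m$ and hence are similar, so one can pick $P \in \GL_n(\C)$ with $N_2 = PN_1P^{-1}$. Raising this to the $p$-th power and using $N_1^p = N_2^p = A$ yields $A = PAP^{-1}$, so that $P$ lies in the centralizer group $C(A) := \{Q \in \GL_n(\C) : QA = AQ\}$. Thus $C(A)$ acts transitively on $A^{1/p}_m$ by conjugation.

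Next I would check that $C(A)$ is path-connected. The commutant $\calC(A) := \{M \in \Mat_n(\C) : MA = AM\}$ is a $\C$-linear subspace of $\Mat_n(\C)$, and $C(A)$ is the complement in $\calC(A)$ of the algebraic hypersurface $\{M \in \calC(A) : \det M = 0\}$. This hypersurface is proper since $I_n \in C(A)$, and by a standard argument the complement of a proper algebraic hypersurface in a finite-dimensional complex affine space is path-connected: given two points of $C(A)$, the restriction of $\det$ to the complex line joining them is a non-zero polynomial, hence has only finitely many zeros, which one can detour around by a continuous path in $\C$.

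Finally, given $N_1, N_2 \in A^{1/p}_m$ and a corresponding $P \in C(A)$, I would pick a continuous path $t \mapsto P_t$ in $C(A)$ from $I_n$ to $P$ and set $N_t := P_t N_1 P_t^{-1}$. Because each $P_t$ centralizes $A$, one gets $N_t^p = P_t A P_t^{-1} = A$, and conjugation preserves the Jordan profile; therefore $(N_t)_{t \in [0,1]}$ is a continuous path in $A^{1/p}_m$ joining $N_1$ to $N_2$. I foresee no serious obstacle here; the only point requiring care is the path-connectedness of $C(A)$, which is classical.
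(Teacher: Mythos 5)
Your proposal is correct and follows essentially the same route as the paper: similarity of two roots with the same profile, observation that the conjugating matrix lies in the centralizer of $A$, and path-connectedness of the invertible elements of the centralizer (the paper cites this as a standard fact about Zariski-open subsets of a complex vector space, whereas you sketch the classical line-restriction argument). No gaps.
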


\begin{lemma}\label{connectingcelllemma}
Let $m,m'$ be adjacent profiles in $P_p(A)$. Then there exist
$N \in A^{1/p}_m$ and $N' \in A^{1/p}_{m'}$ together with a path from $N$ to $N'$ in $A^{1/p}$.
\end{lemma}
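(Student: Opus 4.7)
The plan is to reduce the statement to a local problem in dimension $k+l$ and then construct an explicit path in that reduced setting. Assume without loss of generality that $m-m'=e_k+e_l-e_{k+1}-e_{l-1}$ (the other sign is symmetric). Pick any $N_0 \in A^{1/p}_m$. Since $m_k(N_0),m_l(N_0)\geq 1$, the Jordan decomposition of $N_0$ provides an $N_0$-invariant direct-sum splitting $\C^n = V \oplus W$ in which $\dim V = k+l$, $N_0|_V$ has Jordan type $e_k+e_l$, and $N_0|_W$ has profile $m - e_k - e_l$. Both factors are invariant under $A = N_0^p$, so $A = A|_V \oplus A|_W$. It then suffices to exhibit a continuous path $t \in [0,1] \mapsto \widetilde N(t) \in (A|_V)^{1/p}$ with $\widetilde N(0) = N_0|_V$ and $\widetilde N(1)$ of Jordan type $e_{k+1}+e_{l-1}$: the concatenated path $t \mapsto \widetilde N(t) \oplus N_0|_W$ then lies in $A^{1/p}$, starts at $N_0$ (profile $m$), and ends at a matrix of profile $(m-e_k-e_l)+(e_{k+1}+e_{l-1}) = m'$, which provides the desired $N$, $N'$, and path.

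After choosing a basis of $V$ identifying $N_0|_V$ with $J_k \oplus J_l$, the core task becomes the construction of an explicit continuous path in $\bigl((J_k \oplus J_l)^p\bigr)^{1/p}$ from $J_k \oplus J_l$ to a matrix of Jordan type $e_{k+1}+e_{l-1}$. The existence of an endpoint in the target stratum is exactly what Lemma \ref{Jordanppowersimilarity} delivers: up to conjugation by some $P$, the matrix $P^{-1}(J_{k+1}\oplus J_{l-1})P$ is a $p$-th root of $(J_k\oplus J_l)^p$ with the right Jordan type. The substantive problem is to produce a continuous family interpolating between these two points while staying in the fiber of the $p$-th power map.

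The main obstacle is this explicit construction. The plan is to work in a chain basis $(u_1,\ldots,u_k,w_1,\ldots,w_l)$ of $V$ in which $J_k$ and $J_l$ act as the standard shifts on the two chains, and to write $\widetilde N(t)$ as a polynomial perturbation of $J_k\oplus J_l$ whose $t$-dependent entries couple the two chains near the indices $k$ and $l$. Intuitively, the perturbation transfers one unit of mass from the top of the $w$-chain to the top of the $u$-chain as $t$ grows from $0$ to $1$, lengthening the first chain by one and shortening the second by one. The nontrivial requirement is that $\widetilde N(t)^p = (J_k \oplus J_l)^p$ hold \emph{identically} in $t$, which forces all coefficients of positive powers of $t$ in the expansion of $\widetilde N(t)^p$ to vanish. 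This yields a small system of polynomial identities in the nonzero entries; the adjacency numerical condition $pa \leq k < l \leq p(a+1)$ is exactly what makes the system consistent, by guaranteeing via Lemma \ref{powerJordan} that the $p$-th power structure of both Jordan types matches block-by-block. Once $\widetilde N(t)$ is in hand, a direct computation of the successive dimensions $\dim \ker \widetilde N(1)^i$ verifies that $\widetilde N(1)$ has the required Jordan type $e_{k+1}+e_{l-1}$.
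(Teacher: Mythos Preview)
Your reduction to the local problem in dimension $k+l$ is correct and matches the paper's approach exactly: split off a $J_k \oplus J_l$ summand and work there. The gap is in the core construction. You describe a plan to write $\widetilde N(t)$ as a polynomial perturbation coupling the two chains, and then assert that the resulting system of identities is consistent ``because'' the adjacency condition guarantees (via Lemma~\ref{powerJordan}) that the $p$-th powers of $J_k \oplus J_l$ and $J_{k+1} \oplus J_{l-1}$ are similar. That implication does not follow: knowing that two matrices have similar $p$-th powers tells you only that the target stratum meets the fiber $(A|_V)^{1/p}$; it does not produce, or prove the existence of, a continuous path inside that fiber connecting the two points. You have not written down the perturbation, not written down the system, and not solved it; the phrase ``is exactly what makes the system consistent'' is doing all the work and is unjustified.

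The paper sidesteps this difficulty with a different idea. It first writes down a very simple one-parameter deformation $U_t$ of $J_k \oplus J_l$ (altering the image of a single basis vector) and checks by an explicit change of basis that $U_t$ is similar to $J_k \oplus J_l$ for $t \in [0,1)$ and to $J_{k+1}\oplus J_{l-1}$ at $t=1$. This forces $U_t^p$ to stay in a single similarity class, but $U_t^p$ is \emph{not} constant. The key step is then topological: the conjugation map $P \mapsto P\, U_0^p\, P^{-1}$ from $\GL_{k+l}(\C)$ onto the similarity class of $U_0^p$ is a fibration, so the path $t \mapsto U_t^p$ lifts to a path $q(t)$ in $\GL_{k+l}(\C)$ with $q(0)=I$, and the corrected path $\gamma(t) := q(t)^{-1} U_t\, q(t)$ then has constant $p$-th power. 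This path-lifting argument is the substantive content you are missing; without it, or without actually exhibiting and verifying a perturbation $\widetilde N(t)$ with $\widetilde N(t)^p$ identically constant, your proposal is incomplete.
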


\begin{lemma}\label{graphlemma}
The graph $(P_p(A),\calA_p(A))$ is connected.
\end{lemma}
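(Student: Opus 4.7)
The plan is to organize $P_p(A)$ by band statistics. For any profile $m$, set
$$T_a(m) := \sum_{r=1}^p m_{pa+r}, \qquad S_a(m) := \sum_{r=1}^p r\,m_{pa+r} \qquad (a \geq 0).$$
Writing $n$ for the profile of $A$, Lemma \ref{powerJordan} applied block by block shows that $m \in P_p(A)$ is equivalent to $n_j = S_{j-1}(m) + p\,T_j(m) - S_j(m)$ for every $j \geq 1$ (with $S_{-1}(m) := 0$), which in particular forces $S_a(m) \bmod p$ to depend only on $n$. The $p$-adjacency moves split into three families according to where $k$ sits. A \emph{within-band} move ($pa+1 \leq k < l \leq p(a+1)$) leaves every $T_b$ and $S_b$ invariant and merely reshuffles tokens inside band $a$. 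A \emph{cross-band} move ($k = pa$ with $a \geq 1$) transfers one token from position $pa$ (band $a-1$) to position $pa+1$ (band $a$) and shifts $(T_{a-1}, S_{a-1}, T_a)$ by $(-1, -p, +1)$ while leaving $S_a$ and the remaining statistics fixed. A \emph{phantom} move ($k=0$, $2 \leq l \leq p$), permitted by the convention $e_0=0$, replaces one block of size $l$ by blocks of sizes $1$ and $l-1$, and thus adds $1$ to $T_0$ while leaving every $S_b$ and every other $T_b$ unchanged.

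The proof then has two stages. First, any two profiles $m, m' \in P_p(A)$ sharing the same sequence $(T_a, S_a)_{a \geq 0}$ are linked by within-band moves: in each band $a$, iteratively pinching an occupied pair $r_1 < r_2$ to $r_1+1 \leq r_2-1$ strictly reduces the spread while preserving $T_a$ and $S_a$, and the procedure terminates at the unique balanced configuration supported on $\{\lfloor S_a/T_a\rfloor,\lceil S_a/T_a\rceil\}$, to which both $m$ and $m'$ collapse. Second, any two valid statistics sequences $(T_a, S_a)_{a \geq 0}$ and $(T'_a, S'_a)_{a \geq 0}$ (both compatible with $n$) must be linked by cross-band and phantom moves. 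The constraint forces $S'_a - S_a \in p\Z$, so $\delta := T'_0 - T_0$ and $\delta_a := (S'_a - S_a)/p$ are integers, and I would proceed by induction on $|\delta| + \sum_a |\delta_a|$, picking at each step an elementary move that strictly decreases this quantity.

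The main obstacle lies in this second stage, namely checking that at each inductive step an admissible move actually exists, i.e.\ one that preserves $T_a \geq 0$ and $T_a \leq S_a \leq p T_a$ throughout. A candidate cross-band move at level $a$ can be blocked because band $a-1$ has no token at the boundary position $pa$, or because band $a$ lacks the needed slack ($S_a = T_a$ or $S_a = p T_a$). I would circumvent such obstructions by ordering the cross-band operations from the largest affected index downward when the $S$-values must decrease (and the reverse when they must increase), and by using phantom moves in band $0$ as a flexible reservoir of elementary adjustments. Combined with the first stage, this produces a path in $(P_p(A), \calA_p(A))$ between any two of its vertices, which is precisely Lemma \ref{graphlemma}.
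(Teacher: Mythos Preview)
Your band-statistics framework is set up correctly: the formula $n_j=S_{j-1}(m)+pT_j(m)-S_j(m)$ does characterize membership in $P_p(A)$, and it does force each $S_a(m)\bmod p$ to depend only on $n$ (since $S_a\equiv\sum_{j>a}n_j\bmod p$ by downward recursion from the eventually-zero tail). Stage~1 is also fine: within a fixed band, repeated pinches strictly decrease the second moment $\sum_r r^2 m_{pa+r}$ and hence terminate at the unique balanced configuration determined by $(T_a,S_a)$.

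The gap is in Stage~2, and it is a genuine one rather than a matter of missing detail. You correctly identify the obstruction but do not actually resolve it. Concretely, after within-band rearrangement the forward cross-band move at level $a$ is available only when band $a-1$ can carry a token at position $p$ (equivalently $S_{a-1}\ge T_{a-1}+p-1$) \emph{and} band $a$ has a token at position $\ge 2$ (equivalently $S_a>T_a$); the reverse move has analogous constraints. Nothing in your induction on $|\delta|+\sum_a|\delta_a|$ guarantees these inequalities when you need them, and the proposed fix---order the moves top-down and use phantom moves in band $0$ as a reservoir---is asserted rather than verified. In particular, phantom moves affect only $T_0$, so they cannot directly unblock a cross-band move at level $a\ge 2$; and ordering top-down does not by itself ensure that the band below has the required slack $S_{a-1}\ge T_{a-1}+p-1$. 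Until you produce an invariant or a case analysis showing that an admissible move decreasing your potential always exists, the argument is incomplete.

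The paper's proof takes a much lighter route that sidesteps this bookkeeping entirely. It reformulates the lemma as: any two profiles $m,m'$ with $m^{[p]}=(m')^{[p]}$ are joined by a $p$-chain, and proves this by induction on the total size $S(m)=\sum_k k\,m_k$. If some index $k$ has $m_k>0$ and $m'_k>0$, strip off $e_k$ from both and recurse on the smaller size. Otherwise let $q$ be the largest occupied index overall (say $m'_q>0$, so $m_q=0$), locate its band $\lcro pa+1,p(a+1)\rcro$, and observe that $m^{[p]}_{a+1}=(m')^{[p]}_{a+1}\ge q-pa$ forces $m$ to have mass in that same band; a short chain of $p$-adjacent moves (using either two copies at the current top index, or the top index together with a lower occupied index in the band) then pushes the top occupied index of $m$ up one step at a time until it reaches $q$, reducing to the first case. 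This avoids all the feasibility checks that stalled your Stage~2, because each move uses tokens that are already known to be present.
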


Combining those three results readily yields Theorem \ref{maintheo}.

\section{Proof of Theorem \ref{maintheo}}

Throughout this part, we let $A \in \Mat_n(\C)$ be a nilpotent matrix and $p$ be a positive integer.

\subsection{Proof of Lemma \ref{celllemma}}

Let $m$ belong to $P_p(A)$.
Let $X$ and $Y$ belong to $A^{1/p}_m$. The matrices $X$ and $Y$ are nilpotent with the same profile, and hence
they are similar. Thus we have some $P \in \GL_n(\C)$ such that $Y=P X P^{-1}$. Since $X^p=Y^p=A$, we obtain that
$P$ belongs to the centralizer $C(A)$ of $A$ in the algebra $\Mat_n(\C)$. As $C(A) \cap \GL_n(\C)$ is a Zariski-open subset of the complex
finite-dimensional vector space $C(A)$, it is path-connected (see Lemma 7.2 in \cite{Shafarevich}).
Choose a path $Q : t \in [0,1] \mapsto Q(t) \in C(A) \cap \GL_n(\C)$ from $I_n$ to $P$. Then, one checks that
$q : t \in [0,1] \mapsto Q(t) X Q(t)^{-1}$ is a path from $X$ to $Y$, and $q(t)^p=Q(t) A Q(t)^{-1}=A$ for all $t \in [0,1]$.
Finally, $q(t)$ is similar to $X$ for all $t \in [0,1]$, and hence its profile is $m$.
Hence, there is a path from $X$ to $Y$ in $A^{1/p}_m$. This completes the proof of Lemma \ref{celllemma}.

\subsection{Proof of Lemma \ref{connectingcelllemma}}

As we will see, the proof of Lemma \ref{connectingcelllemma} boils down to the following basic result:

\begin{lemma}\label{basicpathlemma}
Let $a,k,l$ be integers such that $0 \leq pa \leq k<l \leq p(a+1)$.
Set $N:=k+l$. Then there exists a path $\gamma : [0,1] \rightarrow \Mat_N(\C)$ such that:
\begin{enumerate}[(i)]
\item $\gamma(0)=J_k \oplus J_l$;
\item $\gamma(1)$ is similar to $J_{k+1} \oplus J_{l-1}$;
\item the mapping $t\in [0,1] \mapsto \gamma(t)^p$ is constant.
\end{enumerate}
\end{lemma}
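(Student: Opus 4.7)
My plan is to construct $\gamma$ explicitly. The natural ansatz is $\gamma(t) := T + tX$ with $T := J_k \oplus J_l$ and $X \in \Mat_N(\C)$ a nilpotent matrix chosen so as to make condition (iii) hold; condition (i) is then automatic.

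Expanding $(T+tX)^p$ as a polynomial in $t$, the coefficient of $t^j$ is the sum of all length-$p$ products in $T$ and $X$ containing exactly $j$ factors of $X$. I will impose the algebraic identities
\[
TX+XT=0, \qquad X^2=0, \qquad \text{and, if $p$ is odd,}\quad XT^{p-1}=0.
\]
Under the first two, any product containing two or more $X$'s can be rewritten (via $T^a X = (-1)^a X T^a$) so that the $X$'s become adjacent, and is then killed by $X^2=0$; this eliminates the coefficients of $t^j$ for all $j\ge 2$. The coefficient of $t^1$, namely $\sum_{i=0}^{p-1} T^i X T^{p-1-i}$, collapses by the same manipulation to $\bigl(\tfrac{1-(-1)^p}{2}\bigr) X T^{p-1}$, which vanishes automatically for even $p$ and, for odd $p$, is killed by the third identity. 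Thus (iii) will be secured.

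The heart of the proof is then to exhibit $X$ satisfying those identities and such that $T+X$ has Jordan type $J_{k+1} \oplus J_{l-1}$, which gives (ii). In the standard basis $(e_1,\ldots,e_k,f_1,\ldots,f_l)$ of $\C^{k+l}$ (with $Te_i=e_{i-1}$ and $Tf_j=f_{j-1}$), I define $X$ by a \emph{cross-chain} formula: each basis vector is sent by $X$ to a signed combination of its predecessor in its own chain and a vector of the opposite chain at the matching level. Alternating signs of the form $(-1)^{i-1}$ enforce $TX+XT=0$ by a direct cancellation; the interlacing shift between the two chains is tuned so that $X^2=0$ holds by a sign argument; and for odd $p$ the support of $X$ is restricted so that $X$ vanishes on $\im(T^{p-1})$. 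Finally I verify (ii) by computing $\rk((T+X)^m)$ for $m \ge 1$, whose successive differences give the conjugate partition of the Jordan form of $T+X$; this partition should match the conjugate of $(l-1,k+1)$.

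The main obstacle is producing the explicit formula for $X$ that simultaneously satisfies all the identities and yields the target Jordan type. Boundary effects (out-of-range indices at $e_k$ and $f_l$) require care, and case distinctions based on the parities of $p$ and of $\Delta:=l-k$ may be necessary; in particular, the trivial subcase $\Delta=1$, where $J_k \oplus J_l$ and $J_{k+1}\oplus J_{l-1}$ coincide as unordered sums of Jordan blocks, is handled by a constant path.
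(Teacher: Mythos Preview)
Your algebraic reduction for condition (iii) is correct: under $TX+XT=0$ and $X^2=0$ (plus $XT^{p-1}=0$ for odd $p$), the map $t\mapsto (T+tX)^p$ is indeed constant. The fatal problem is that these same identities are \emph{too strong} to allow condition (ii). From $TX+XT=0$ and $X^2=0$ you get $(T+X)^2=T^2$, and hence $(T+X)^{2j}=T^{2j}$ for every $j\ge 1$. In particular $\rk\bigl((T+X)^{2j}\bigr)=\rk(T^{2j})$ for all $j$. But a direct computation shows
\[
\rk\bigl((J_k\oplus J_l)^m\bigr)-\rk\bigl((J_{k+1}\oplus J_{l-1})^m\bigr)=
\begin{cases}
1 & \text{if } k<m<l,\\
0 & \text{otherwise,}
\end{cases}
\]
so $T+X$ can be similar to $J_{k+1}\oplus J_{l-1}$ only if the interval $(k,l)$ contains no even integer. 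For $p\ge 3$ this fails already in small cases: take $a=0$, $k=1$, $l=3$ (allowed whenever $p\ge 3$). Then $(T+X)^2=T^2=(J_1\oplus J_3)^2$ has rank $1$, whereas $(J_2\oplus J_2)^2=0$; so no $X$ with your identities can give $T+X\simeq J_2\oplus J_2$. Your scheme survives only for $p=2$, where $l-k\le 2$ and $k=2a$ is even, so $(k,l)$ never contains an even integer.

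The paper avoids this obstruction by not insisting that $\gamma(t)^p$ be constant from the outset. It first writes down an explicit one-parameter family $U_t$ (a single entry of $J_k\oplus J_l$ is deformed) with $U_0=J_k\oplus J_l$, $U_1\simeq J_{k+1}\oplus J_{l-1}$, and $U_t\simeq U_0$ for $t\in(0,1)$; thus $t\mapsto U_t^p$ stays in the similarity class of $U_0^p$ but is not constant. It then uses that $P\mapsto P\,U_0^p\,P^{-1}$ is a fibration over that similarity class to lift $t\mapsto U_t^p$ to a path $q$ in $\GL_N(\C)$ with $q(0)=I_N$, and sets $\gamma(t):=q(t)^{-1}U_t\,q(t)$; now $\gamma(t)^p$ is genuinely constant. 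If you want to rescue a purely algebraic construction along your lines, you would need to replace the anticommutation ansatz by weaker relations that do not pin down $(T+X)^2$, and it is unclear how to do that while still controlling all $p$ cross-terms.
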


\begin{proof}
We shall think in terms of endomorphisms of $\C^N$: denote by $u$ the endomorphism of $\C^N$ represented by
$J_k \oplus J_l$ in the standard basis $(x_k,\dots,x_1,y_l,\dots,y_1)$ of $\C^N$.
We convene that $y_j=0$ for all $j>l$, and that $x_i=0$ for all $i>k$.
Hence, $u$ maps $x_i$ to $x_{i+1}$ for all $i>0$, and it maps
$y_j$ to $y_{j+1}$ for all $j>0$.
Given $t \in [0,1]$, define $u_t$ as the endomorphism of $\C^N$ on the standard basis by
$u_t(y_1)=(1-t)y_2+tx_1$, and by mapping any other vector $z$ of that basis to $u(z)$.
Clearly, $t \in [0,1] \mapsto u_t$ is a path in the space of all endomorphisms of $\C^N$, and $u_0=u$.

Next, one sees that $u_1$ is represented by the matrix
$J_{k+1} \oplus J_{l-1}$ in the basis $(x_k,\dots,x_1,y_1,y_l,\dots,y_2)$.

Next, let $t \in (0,1)$. One checks that
$(x_k,\dots,x_1,(1-t)y_l+t x_{l-1},\dots,(1-t)y_2+tx_1,y_1)$ is a basis of $\C^N$, and the matrix of
$u_t$ in that basis is $J_k \oplus J_l$. Hence, $u_t$ is similar to $u_0$, and it follows that
$u_t^p$ is similar to $u_0^p$.
Besides, Lemma \ref{Jordanppowersimilarity} shows that $u_1^p$ is also similar to $u_0^p$.

Now, for $t \in [0,1]$, denote by $U_t$ the matrix of $u_t$ in the standard basis of $\C^N$.
It follows from the above that $t\in [0,1] \mapsto U_t$ is a path, in the space $\Mat_N(\C)$, from $J_k \oplus J_l$
to a matrix that is similar to $J_{k+1} \oplus J_{l-1}$, and that the path $t\in [0,1] \mapsto (U_t)^p$ takes its values in the similarity class
$S(U_0^p)$ of the matrix $U_0^p$.

It is folklore that the mapping $P \in \GL_N(\C) \mapsto P U_0^p P^{-1} \in S(U_0^p)$ is a fibration
(it is a principal fibre bundle whose structural group is the group of all invertible elements of the centralizer of $U_0^p$):
see Appendix B for a short elementary proof, and the combination of Theorem 1.4.3 and Proposition 1.4.6 of \cite{Brocker} and
Proposition 8.3 of \cite{Humphreys} for a more sophisticated one.
Hence, there is a path $q : [0,1] \rightarrow \GL_N(\C)$ such that
$$\forall t \in [0,1], \; U_t^p=q(t)\, U_0^p\, q(t)^{-1}\quad \text{and} \quad q(0)=I_N.$$
Finally, we consider the path $\gamma : t \in [0,1] \mapsto q(t)^{-1}\, U_t\, q(t) \in \Mat_N(\C)$.
The above properties of $q$ show that $t \mapsto \gamma(t)^p$ is constant. Next, $\gamma(0)=U_0=J_k \oplus J_l$.
Finally, $\gamma(1)$ is similar to $U_1$ and hence to $J_{k+1} \oplus J_{l-1}$.
\end{proof}

Now, we can prove Lemma \ref{connectingcelllemma}.
Let $m,m'$ be distinct adjacent profiles in $P_p(A)$. We wish to prove that some element of $A^{1/p}_m$
is path-connected in $A^{1/p}$ to some element of $A^{1/p}_{m'}$.
Without loss of generality, we can assume that there is a non-negative integer $a$ together with
elements $k<l$ of $\lcro pa,p(a+1)\rcro$ such that $m-m'=e_k+e_l-e_{k+1}-e_{l-1}$.
As $m \neq m'$, we must have $l>k+1$, and it follows that $m_k>0$ and $m_l>0$.
Let us choose $N \in A^{1/p}_m$.
Then $N$ has at least one Jordan cell of each size $k$ and $l$. Hence, $N=P(B \oplus J_k \oplus J_l)P^{-1}$
for some nilpotent matrix $B$ and some $P \in \GL_n(\C)$. The profile of $B$ is obviously $m-e_k-e_l$.

Let us take a path $\gamma$ that satisfies the conclusion of Lemma \ref{basicpathlemma} for the pair $(k,l)$:
then, $q : t \in [0,1] \mapsto P (B \oplus \gamma(t))P^{-1}$ is a path in $\Mat_n(\C)$, and we see from
condition (iii) in Lemma \ref{basicpathlemma} that $t \mapsto q(t)^p$ is constant with value $q(0)^p=N^p=A$.
In other words, $q$ is a path in $A^{1/p}$. Finally, $q(1)$ is similar to $B \oplus \gamma(1)$, and hence to
$B \oplus J_{k+1} \oplus J_{l-1}$, whose profile equals $(m-e_k-e_l)+e_{k+1}+e_{l-1}=m'$.
Hence, $q(1) \in A^{1/p}_{m'}$. This completes the proof of Lemma \ref{connectingcelllemma}.

\subsection{Proof of Lemma \ref{graphlemma}}

We start with some preliminary notation.
Given an element $m \in \Z^{(\N^*)}$, we set
$$S(m):=\sum_{k=1}^{+\infty} k m_k$$
(called the \textbf{size} of $m$), and
$$m^{[p]}:=\biggl(\sum_{-p<k<p} (p-|k|)\,m_{pa+k}\biggr)_{a \geq 1},$$
which is an element of $\Z^{(\N^*)}$. Note that both maps
$S : \Z^{(\N^*)} \rightarrow \Z$ and $m \in \Z^{(\N^*)} \mapsto m^{[p]} \in \Z^{(\N^*)}$
are group homomorphisms.

Using the results recalled in the introduction, one sees that if $m$
is the profile of some nilpotent matrix $N$, then $m^{[p]}$ is the profile of $N^p$, while
$S(m)$ is obviously the number of rows of $N$, and hence $S(m^{[p]})=S(m)$.
Besides, using Lemma \ref{Jordanppowersimilarity}, we find that
$m^{[p]}=(m')^{[p]}$ for any two $p$-adjacent profiles $m$ and $m'$.

Given profiles $m$ and $m'$, a \textbf{$p$-chain of profiles from $m$ to $m'$} is a list $(a^{(0)},\dots,a^{(N)})$ of profiles
such that $a^{(i)} \underset{p}{\sim} a^{(i+1)}$ for all $i \in \lcro 0,N-1\rcro$, and $m=a^{(0)}$ and $m'=a^{(N)}$.

From there, Lemma \ref{graphlemma} can be seen as a reformulation of the following result:

\begin{lemma}\label{lastlemma}
Let $m,m'$ be two profiles such that $m^{[p]}=(m')^{[p]}$.
Then there is a $p$-chain of profiles from $m$ to $m'$.
\end{lemma}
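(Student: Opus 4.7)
My plan is to reformulate the statement via conjugate partitions and proceed by induction on the maximum coordinate of disagreement.

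For a profile $m$, define $c_j := \sum_{k \geq j} m_k$ (the \emph{conjugate partition}), so that $c_1 \geq c_2 \geq \cdots$, $m_k = c_k - c_{k+1}$, and $\rk(N^j) = \sum_{i > j}c_i$ when $N$ has profile $m$. A short computation using the identity $(m^{[p]})_a = \rk(N^{p(a-1)}) - 2\rk(N^{pa}) + \rk(N^{p(a+1)})$ yields
\[
(m^{[p]})_a = B_{a-1}(c) - B_a(c), \quad\text{where } B_a(c) := \sum_{j=pa+1}^{p(a+1)} c_j.
\]
Hence $m^{[p]} = (m')^{[p]}$ is equivalent to the equality of the block sums $B_a(c) = B_a(c')$ for every $a \geq 0$. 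One also checks directly that a $p$-adjacency move $e_k+e_l \leftrightarrow e_{k+1}+e_{l-1}$ (with $pa \leq k < l \leq p(a+1)$ and $l \geq k+2$) translates on the conjugate side into a \emph{within-block swap} $c_{k+1} \mapsto c_{k+1} \pm 1$, $c_l \mapsto c_l \mp 1$, where $k+1$ and $l$ both lie in the single block $\lcro pa+1, p(a+1)\rcro$, subject to the result still being non-increasing.

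The lemma is then equivalent to the following: any two partitions $c,c'$ with $B_a(c) = B_a(c')$ for all $a \geq 0$ are joined by a sequence of within-block swaps, each preserving the partition property. I would prove this by induction on $j^\star := \max\{j \geq 1 : c_j \neq c'_j\}$. The base case $j^\star = 0$ is trivial. For the inductive step, let $j^\star \in \lcro pa+1, p(a+1)\rcro$, and by the symmetric roles of $c$ and $c'$ assume $c_{j^\star} > c'_{j^\star}$. Block-sum equality combined with $c_j = c'_j$ for $j > j^\star$ forces some index in $\lcro pa+1, j^\star - 1\rcro$ to satisfy $c_j < c'_j$; let $j^{\star\star}$ be the smallest such. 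The swap ``$+1$ at $j^{\star\star}$, $-1$ at $j^\star$'' is then legitimate: first, $c_{j^\star} > c'_{j^\star} \geq c'_{j^\star+1} = c_{j^\star+1}$ yields $c_{j^\star} > c_{j^\star+1}$; and when $j^{\star\star}\geq pa+2$, the minimality of $j^{\star\star}$ forces $c_{j^{\star\star}-1} \geq c'_{j^{\star\star}-1} \geq c'_{j^{\star\star}} > c_{j^{\star\star}}$. The resulting partition $c''$ satisfies $\max\{j : c''_j \neq c'_j\} < j^\star$, closing the induction.

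The principal obstacle is the boundary case $j^{\star\star} = pa+1$, where $c_{j^{\star\star}-1}=c_{pa}$ belongs to the \emph{previous} block, so minimality is not directly available. When $c_{pa} \geq c'_{pa}$, the same estimate $c_{pa} \geq c'_{pa} \geq c'_{pa+1} > c_{pa+1}$ suffices. When instead $c_{pa} < c'_{pa}$, the index $pa$ is itself in deficit, and one must first execute a preparatory within-block swap in block $a-1$ (whose existence is again forced by the block-sum equality for block $a-1$) to raise $c_{pa}$, before carrying out the swap in block $a$. Turning this into a rigorous argument requires a secondary induction, for example on the lexicographic order $(j^\star, \sum_j |c_j - c'_j|)$, but the bookkeeping is purely combinatorial and terminates. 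Translating the resulting chain of partitions back into profiles finally delivers the desired $p$-chain from $m$ to $m'$.
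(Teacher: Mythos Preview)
Your reformulation via conjugate partitions is correct and gives a genuinely different route from the paper: the identity $(m^{[p]})_a = B_{a-1}(c)-B_a(c)$ does reduce the hypothesis to equality of all block sums, and $p$-adjacency translates precisely into within-block unit swaps on $c$. The paper instead works directly with the profiles $m,m'$, inducting on the size $S(m)$: if $m_k>0$ and $m'_k>0$ for some $k$ it subtracts $e_k$ from both and recurses; otherwise it uses $p$-adjacency moves to raise the largest Jordan size of $m$ up to that of $m'$, thereby manufacturing a common positive entry. Your conjugate-partition picture is arguably more transparent about \emph{why} the moves suffice, while the paper's argument is shorter and sidesteps the boundary bookkeeping you run into.

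There is, however, a genuine error in your inductive step. You assert that after the swap ``$+1$ at $j^{\star\star}$, $-1$ at $j^\star$'' the new maximum index of disagreement drops strictly below $j^\star$; this fails whenever $c_{j^\star}-c'_{j^\star}\geq 2$. For instance, with $p=3$, $c=(3,3,0,\dots)$ and $c'=(5,1,0,\dots)$ (equal block sums $B_0=6$), one has $j^\star=2$, $j^{\star\star}=1$, and after one swap $c''=(4,2,0,\dots)$ still disagrees with $c'$ at position $2$. The remedy is the one you reserved for the boundary case: run the \emph{entire} induction on $\sum_j|c_j-c'_j|$ (or on your lexicographic pair), not on $j^\star$ alone. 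Each legal main-case swap then decreases this sum by $2$; and in the boundary case $j^{\star\star}=pa+1$ with $c_{pa}<c'_{pa}$, one takes in block $a-1$ the largest index $s$ with $c_s>c'_s$ and the smallest index $t>s$ with $c_t<c'_t$ (both exist by the block-sum equality), and the swap ``$-1$ at $s$, $+1$ at $t$'' is valid---the inequalities $c_s>c'_s\geq c'_{s+1}\geq c_{s+1}$ and $c_{t-1}\geq c'_{t-1}\geq c'_t>c_t$ follow from the extremal choices of $s,t$---and again drops the sum by $2$. With this correction the argument closes.
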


\begin{proof}
Note that the assumptions yield $S(m)=S(m^{[p]})=S((m')^{[p]})=S(m')$.
We will prove the result by induction on the size of $m$.

The result is obvious if $S(m)=0$: in that case both $m$ and $m'$ equal the zero sequence, and we simply take the trivial chain $(m)$.
Assume now that $S(m)>0$.

Assume first that there exists an integer $k \geq 1$ such that $m_k>0$ and $m'_k>0$.
Then $m-e_k$ and $m'-e_k$ obviously satisfy the assumptions, and their size equals $S(m)-k$. By induction, there
is a $p$-chain $(a^{(0)},\dots,a^{(N)})$ of profiles from $m-e_k$ to $m'-e_k$.
Clearly, $(a^{(0)}+e_k,\dots,a^{(N)}+e_k)$ is a $p$-chain of profiles from $m$ to~$m'$.

Hence, in the remainder of the proof we assume that $m_km'_k=0$ for all $k \geq 1$.
Denote by $q$ the greatest positive integer such that $m_q+m'_q>0$. Without loss of generality, we can assume that $m'_q>0$ (and hence $m_q=0)$.
Denote by $a$ the least (non-negative) integer such that $q \in \lcro pa,p(a+1)\rcro$, so that $q>pa$.
Hence, $m^{[p]}_{a+1}=(m')^{[p]}_{a+1} \geq q-pa$.
In particular, $m_k>0$ for some $k \in \lcro pa+1,p(a+1)\rcro$, and we consider the greatest such integer $k$. Note that $pa<k<q$. If $m_k>1$, we note that $m-2e_k+e_{k+1}+e_{k-1}$ is still a profile that is $p$-adjacent to $m$.
If $m_k=1$, then having $m_{a+1}^{[p]} \geq q-pa$ we must also have $m_l>0$ for some $l \in \lcro pa+1,k-1\rcro$, and then we note that
$m-e_k-e_l+e_{k+1}+e_{l-1}$ is a profile. In any case, we have found a profile $a^{(k+1)}$ that is
$p$-adjacent to $m$ and for which $k+1$ is the greatest integer $i$ such that $a^{(k+1)}_i>0$.
Continuing by finite induction, we create a $p$-chain $(a^{(k)},a^{(k+1)},\dots,a^{(q)})$ of
profiles from $m$ to some profile $a^{(q)}$ such that $(a^{(q)})_q>0$.
Hence $(a^{(q)})^{[p]}=\cdots=(a^{(k)})^{[p]}=m^{[p]}=(m')^{[p]}$. As $(a^{(q)})_q>0$, the first case tackled in the above
yields a $p$-chain of profiles from $a^{(q)}$ to $m'$. Linking those $p$-chains yields a $p$-chain of profiles from $m$ to~$m'$.
\end{proof}

Lemmas \ref{celllemma} to \ref{graphlemma} are now proved, and hence Theorem \ref{maintheo} is established.

\section{Further questions}

Now that Theorem \ref{maintheo} has been proved, we wish to suggest several related open problems.
First, given an analytic function $f : U \rightarrow \C$, what are the nilpotent complex matrices $A$ for which
the set of all solutions of the equation $f(X)=A$ is path-connected? More precisely, is there a simply characterization of such matrices
in terms of the profile of $A$ and the zeroes of $f$ (and their multiplicities)?

Next, given a positive integer $p$, we wonder about the homotopy type of $A^{1/p}$. For example, if $A=0$
then $A^{1/p}$ is contractible (since it is star-shaped around $0$). However, for $E:=\begin{bmatrix}
0 & 0 & 1 \\
0 & 0 & 0 \\
0 & 0 & 0
\end{bmatrix}$, one
checks that $E^{1/2}$ is the set of all matrices of the form $\begin{bmatrix}
0 & x & y \\
0 & 0 & x^{-1} \\
0 & 0 & 0
\end{bmatrix}$, a space that is homeomorphic to $(\C \setminus \{0\}) \times \C$ and hence homotopy equivalent to
the circle $S^1$ (and not contractible!). Is there a simple way to compute the homotopy type of $A^{1/p}$
as a function of $p$ and the profile of $A$? Computing the fundamental group of $A^{1/p}$ would be interesting, for a start.

There are other interesting open questions related to the real and quaternionic cases. The
set of all square roots of $E$ with \emph{real} entries is homeomorphic to $(\R \setminus \{0\}) \times \R$, and hence it has
exactly two path-connected components. Is there a sensible way to compute the number of path-connected components of
the set of all $p$-th roots of $A$ (with real entries) as a function of $p$ and of the profile of $A$? In that prospect, it is worthwhile to note that the
real equivalent of Lemmas \ref{connectingcelllemma} and \ref{graphlemma} holds (with the same proof): the only step that fails is the real equivalent of Lemma \ref{celllemma}. Nevertheless, the set of all real $p$-th roots of $A$ is a \emph{real} affine variety, and hence it has finitely many path-connected components (alternatively, one can adapt the proof of Lemma \ref{celllemma} to yield that $A^{1/p}_m$ has finitely many path-connected components,
using the fact that $\calC(A) \cap \GL_n(\R)$ is a Zariski open subset of a finite-dimensional real vector space, see \cite{BCR}, Section 2.4).
Finally, there are similar issues in the quaternionic case: in that one however we have not succeeded in finding a single example of a nilpotent quaternionic matrix $A$ and of a positive integer $p$ such that the set of all $p$-th roots of $A$ is not path-connected.

\appendix
\section*{Appendix}

\section{When does the equation $f(X)=N$ have a solution?}

Let $U$ be an open subset of $\C$ and $f : U \rightarrow \C$ be an analytic function. Let $N \in \Mat_n(\C)$
be nilpotent. We wish to characterize the existence of a solution to the equation $f(X)=N$ with unknown $X \in \Mat_n(\C)$.

\begin{lemma}\label{JPf}
Let $N \in \Mat_n(\C)$ be a Jordan cell and $x$ be a zero of $f$ with finite multiplicity $p$.
Write $n=mp+r$ the Euclidean division of $n$ by $p$.
Then $f(x I_n+N)$ is similar to the direct sum of $r$ Jordan cells of size $m+1$ and of $p-r$ Jordan cells of size $m$.
\end{lemma}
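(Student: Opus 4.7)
My plan is to reduce the computation of $f(xI_n+N)$ to the power $N^p$ (where $N=J_n$) by exploiting the fact that $x$ is a zero of $f$ of multiplicity exactly $p$, and then to appeal to Lemma \ref{powerJordan}.

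First, because $N$ is nilpotent of index $n$, any analytic function $h$ defined in a neighborhood of $x$ has a well-defined value on $xI_n+N$ through the truncated Taylor series
$$h(xI_n+N)=\sum_{k=0}^{n-1}\frac{h^{(k)}(x)}{k!}\,N^k.$$
Since $x$ is a zero of $f$ with finite multiplicity $p$, we can write $f(z)=(z-x)^p\,g(z)$ for some analytic function $g$ defined on a neighborhood of $x$ with $g(x)\neq 0$. I would then check (either from the Taylor series or from the ring structure of analytic germs) that this factorization is preserved under the functional calculus at $xI_n+N$, so that
$$f(xI_n+N)=N^p\,g(xI_n+N).$$

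Next, I would argue that the factor $M:=g(xI_n+N)$ is invertible and commutes with $N^p$. Commutation is immediate, since $M$ is a polynomial in $N$ (by the truncated Taylor formula above). Invertibility follows from the decomposition $M=g(x)\,I_n+Q$, where $Q$ is a polynomial in $N$ with no constant term, hence nilpotent; as $g(x)\neq 0$, the matrix $M$ is the product of a nonzero scalar and a unipotent matrix.

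The key intermediate fact I would then invoke is that multiplying a nilpotent matrix by an invertible matrix that commutes with it preserves the Jordan profile. This follows at once from the identity
$$\Ker\bigl((N^p M)^k\bigr)=\Ker(N^{pk}M^k)=M^{-k}\bigl(\Ker N^{pk}\bigr),$$
which gives $\dim\Ker(N^pM)^k=\dim\Ker N^{pk}$ for every $k\geq 0$, so that $f(xI_n+N)=N^p M$ and $N^p$ have the same Jordan profile. Finally, Lemma \ref{powerJordan} applied to $J_n^p$ with $n=pm+r$, $0\leq r<p$, gives precisely $r$ Jordan cells of size $m+1$ and $p-r$ Jordan cells of size $m$, which is the desired conclusion.

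The only mildly delicate point is justifying that the scalar factorization $f=(z-x)^p g$ transfers cleanly to $f(xI_n+N)=N^p\,g(xI_n+N)$ at the level of the truncated Taylor series; once this is set up, the rest is immediate from Lemma \ref{powerJordan} and the kernel-dimension argument above.
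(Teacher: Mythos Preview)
Your proof is correct and follows essentially the same route as the paper: factor $f(z)=(z-x)^p g(z)$, use the functional calculus to get $f(xI_n+N)=N^p\,g(xI_n+N)$, observe that $g(xI_n+N)$ is an invertible element of $\C[N]$ (since $g(x)\neq 0$), and deduce that $N^p\,g(xI_n+N)$ has the same Jordan profile as $N^p$, then conclude via Lemma~\ref{powerJordan}. The only cosmetic difference is that you compare kernel dimensions $\dim\Ker(N^pM)^k$ while the paper compares ranks $\rk\bigl((N^pP)^k\bigr)$; these are equivalent by rank--nullity, and your version has the slight expository advantage of making the invertibility of $M$ explicit.
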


\begin{proof}
This result is known by Lemma \ref{powerJordan} if $f : z \mapsto (z-x)^p$, in which case $f(xI_n+N)=N^p$.
In the general case we factorize $f : z \mapsto (z-x)^p g(z)$ for some analytic function $g$ on $U$. Using the commutation of $P:=g(xI_n+N)$ with $N$, we see that $N^p P$ is nilpotent and $\rk\bigl((N^p P)^k\bigr)=\rk \bigl((N^p)^k P^k\bigr)=\rk \bigl((N^p)^k\bigr)$ for every non-negative integer $k$.
Classically, the similarity class of a nilpotent matrix $M$ is characterized by the sequence of ranks $(\rk M^k)_{k \geq 0}$, and hence
$N^p P \simeq N^p$, which completes the proof.
\end{proof}

If, on the other hand, $x$ is a zero of $f$ with infinite multiplicity (i.e.\ $f$ vanishes on a whole neighborhood of $x$)
then $f(xI_n+N)=\underset{k=0}{\overset{+\infty}{\sum}} \frac{f^{(k)}(x)}{k!}\,N^k=0$ for every nilpotent matrix $N$ of $\Mat_n(\C)$,
so $f(xI_n+N)$ is the direct sum of $n$ Jordan cells of size $1$.

Now, let $X \in \Mat_n(\C)$ be such that $f(X)=N$.
The eigenvalues of $f(X)$ are the images under $f$ of those of $X$, and hence the eigenvalues of $X$ are zeroes of $f$.
Using the Jordan reduction theorem, we obtain
$$X \simeq (x_1 I_{d_1}+N_1) \oplus \cdots \oplus (x_p I_{d_p}+N_p)$$
where $x_1,\dots,x_p$ are zeroes of $f$ and $N_1,\dots,N_p$
are Jordan cells with respective positive sizes $d_1,\dots,d_p$.
Therefore
$$N=f(X) \simeq f(x_1 I_{d_1}+N_1) \oplus \cdots \oplus f(x_p I_{d_p}+N_p)$$
and it follows that the Jordan profile of $N$ is the sum of the Jordan profiles of the matrices
$f(x_k I_{d_k}+N_k)$. Using Lemma \ref{JPf} and the remark thereafter, we deduce the ``only if" part in the following statement:

\begin{theo}
Let $N \in \Mat_n(\C)$ be nilpotent. The following conditions are equivalent:
\begin{enumerate}[(i)]
\item There exists a matrix $X \in \Mat_n(\C)$ such that $f(X)=N$.
\item The Jordan profile of $N$ belongs to the sub-semigroup of $\N^{(\N^*)}$ generated by the elements of the form
$(p-r)\cdot e_a+r\cdot e_{a+1}$ where $p$ is the (finite) multiplicity of some zero of $f$, $a$ is an arbitrary non-negative integer and $r$
belongs to $\lcro 0,p\rcro$, together with the additional element $e_1$ if $f$ has a zero with infinite multiplicity.
\end{enumerate}
\end{theo}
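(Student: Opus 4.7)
The ``only if'' direction has already been argued in the paragraph preceding the theorem statement, so my task is to establish the ``if'' direction. My plan is to reduce it to verifying that each generator of the sub-semigroup in (ii) arises as the profile of $f(X)$ for some $X$, and then to glue via direct sums.

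First, I would exploit two elementary observations. The Jordan profile is additive under direct sums of nilpotent matrices. Moreover, $f$ commutes with block-diagonal direct sums, in the sense that $f(X^{(1)} \oplus \cdots \oplus X^{(s)}) = f(X^{(1)}) \oplus \cdots \oplus f(X^{(s)})$ whenever both sides are defined. Hence, if the profile $m$ of $N$ decomposes as $m = m^{(1)} + \cdots + m^{(s)}$ with each $m^{(i)}$ a generator, and if I can produce matrices $X^{(i)}$ such that $f(X^{(i)})$ is nilpotent of profile $m^{(i)}$, then $X := X^{(1)} \oplus \cdots \oplus X^{(s)}$ satisfies $f(X) \simeq N$. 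Conjugating $X$ by a matrix $P \in \GL_n(\C)$ realizing this similarity turns it into an actual solution of $f(X) = N$.

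Next, I would handle the two kinds of generators separately. For a generator of the form $(p-r)\cdot e_a + r\cdot e_{a+1}$ coming from a zero $x$ of $f$ with finite multiplicity $p$, I would set $n := pa + r$ and take $X := xI_n + J_n$. Lemma \ref{JPf}, applied with the Euclidean division $n = ap + r$ (so its parameter $m$ equals $a$), says precisely that $f(X)$ is similar to the direct sum of $r$ copies of $J_{a+1}$ and $p-r$ copies of $J_a$, i.e.\ a nilpotent matrix of profile $(p-r)\cdot e_a + r\cdot e_{a+1}$. For the additional generator $e_1$ available when $f$ has a zero $x$ with infinite multiplicity, I would take $X := [x] \in \Mat_1(\C)$; since $f(x) = 0$, this $X$ satisfies $f(X) = [0]$, whose profile is $e_1$.

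I do not expect a serious obstacle here: the structure of the argument is dictated by the statement, and both ingredients I need (Lemma \ref{JPf} and the vanishing remark following it) are already in the appendix. The one technical point that merits a moment's care is the direct-sum compatibility of $f$, which one justifies by writing $f$ via its Taylor series around each eigenvalue (or, equivalently, by invoking the holomorphic functional calculus on commuting blocks); this is routine. Once all pieces are assembled, the implication (ii)$\Rightarrow$(i) follows immediately from the decomposition of $m$ into generators.
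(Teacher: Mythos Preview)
Your proposal is correct and follows exactly the approach the paper (tersely) indicates with ``The `if' part of the above statement is proved in a similar fashion as the `only if' part'': you realize each semigroup generator as the profile of $f(xI_n+J_n)$ via Lemma~\ref{JPf} (resp.\ of $f([x])$ in the infinite-multiplicity case), and assemble a solution by direct sums. One tiny cosmetic point: when $r=p$ your ``Euclidean division $n=ap+r$'' is not literally Euclidean, but the generator $(p-p)e_a+p\,e_{a+1}$ coincides with the case $r=0$ at level $a+1$, so no harm is done.
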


The ``if" part of the above statement is proved in a similar fashion as the ``only if" part.

\section{The fibration $P \mapsto PAP^{-1}$}

Here, $\F$ denotes one of the fields $\R$ or $\C$.
Let $A \in \Mat_n(\F)$. Denote by $C(A)$ the centralizer of $A$
in the algebra $\Mat_n(\F)$, by $C(A)^\times$ its group of invertible elements, and by $S(A)$ the similarity class of $A$.
We wish to prove that the mapping $\pi : P \in \GL_n(\F) \mapsto PAP^{-1} \in S(A)$ defines a $C(A)^\times$-principal bundle.
For the continuous left-action $(P,M)\mapsto PMP^{-1}$ of $\GL_n(\F)$ on $\Mat_n(\F)$,
the stabilizer of $A$ is $C(A)^\times$, and hence classically it suffices to prove that the mapping
$\pi$ admits a local cross-section around $A$.

The proof is based upon the following elementary lemma:

\begin{lemma}\label{liftinglemma}
Let $V$ be a finite-dimensional vector space over $\F$. Let $u \in \End(V)$, and let $x_0 \in V$ be a non-zero vector such that $u(x_0)=0$.
Then there exists a neighborhood $U$ of $u$ in $\End(V)$, together with a continuous mapping
$f : U \rightarrow V$ such that $v[f(v)]=0$ for all $v \in U$ with the same rank as $u$, and $f(u)=x_0$.
\end{lemma}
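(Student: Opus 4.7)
The plan is to construct $f$ by continuously extracting an element of $\ker v$ via a fixed basis together with the inversion of a fixed $r \times r$ submatrix. Set $r := \rk u$ and $d := \dim V - r$. First, I would complete $x_0$ to a basis $(x_0, e_2, \ldots, e_d)$ of $\ker u$, and then extend it to a basis $\mathcal{B} := (e_1, e_2, \ldots, e_n)$ of $V$, writing $e_1 := x_0$. In this basis the matrix of $u$ has its first $d$ columns equal to zero, while $u(e_{d+1}), \ldots, u(e_n)$ form a basis of $\im u$, hence are linearly independent.

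By the characterization of rank via minors, I can pick a set $R \subseteq \lcro 1, n\rcro$ of $r$ row indices such that the $r \times r$ submatrix $B(u)$ formed by the rows in $R$ and the last $r$ columns of the matrix of $u$ is invertible. Since $\det B(v)$ depends continuously on $v$, there is an open neighborhood $U$ of $u$ on which $B(v)$ remains invertible; shrinking $U$ if necessary, I can also ensure that $v(e_{d+1}), \ldots, v(e_n)$ stay linearly independent throughout $U$. For $v \in U$, I then set $c(v) := B(v)^{-1} [v(e_1)]_R \in \F^r$, where $[v(e_1)]_R$ denotes the vector of $\mathcal{B}$-coordinates of $v(e_1)$ indexed by $R$; this is continuous in $v$. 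Finally, define
$$f(v) := e_1 - \sum_{j=1}^{r} c(v)_j\, e_{d+j}.$$
Continuity of $f$ is immediate, and since $u(e_1) = 0$ one has $c(u) = 0$, so $f(u) = x_0$.

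It remains to verify that $v[f(v)] = 0$ whenever $v \in U$ satisfies $\rk v = r$. By the very definition of $c(v)$, the $R$-coordinates of $v[f(v)] = v(e_1) - \sum_{j} c(v)_j\, v(e_{d+j})$ vanish. If moreover $\rk v = r$, the linearly independent family $(v(e_{d+1}), \ldots, v(e_n))$ has cardinality $r = \dim \im v$, hence is a basis of $\im v$. Consequently $v[f(v)] \in \im v$ expands uniquely on this family, and the vanishing of its $R$-coordinates translates into a linear system with matrix $B(v)$ (invertible) and right-hand side $0$, forcing all coefficients, and thus $v[f(v)]$ itself, to be zero.

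The main obstacle here is the well-known fact that kernels do not depend continuously on the endomorphism in general: $\dim \ker v$ is only upper semicontinuous. The workaround is to build the formula entirely out of objects that remain well-defined on a full neighborhood of $u$, namely a fixed basis and the inverse of a fixed invertible submatrix, and to invoke the rank hypothesis only at the final step, in order to conclude that vanishing of the $R$-coordinates inside $\im v$ forces the full vector to vanish.
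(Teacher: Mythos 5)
Your proof is correct and follows essentially the same strategy as the paper's: fix a basis adapted to $\ker u$, note that an invertible $r\times r$ block of the matrix of $u$ stays invertible on a neighborhood, use its inverse to define the continuous correction of $x_0$ inside a fixed complement of $\ker u$, and invoke the rank hypothesis only at the very end to upgrade the vanishing of $r$ selected coordinates to the vanishing of $v[f(v)]$ itself. The only cosmetic differences are that the paper normalizes the matrix of $u$ by choosing a second basis for the target (so the invertible block is the top-left $I_p$) and phrases the final step as a rank computation for $M(v)N(v)$, whereas you select the rows $R$ via a nonvanishing minor and argue directly with a basis of $\im v$.
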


\begin{proof}
Denote by $n$ the dimension of $V$, and by $p$ the rank of $u$.
Let us extend $x_0$ first into a basis $(e_{n-p},\dots,e_n)$ of the kernel of $u$, with $e_n=x_0$, and then into a basis
$\bfB:=(e_1,\dots,e_n)$ of $V$. We extend the linearly independent $p$-tuple $(u(e_1),\dots,u(e_p))$ into a basis
$\bfC:=(u(e_1),\dots,u(e_p),f_{p+1},\dots,f_n)$ of $V$.
In the bases $\bfB$ and $\bfC$, the matrix of $u$ reads
$$\begin{bmatrix}
I_p & 0_{p \times (n-p)} \\
0_{(n-p) \times p} & 0_{(n-p) \times (n-p)}
\end{bmatrix}.$$
For any $v \in \End(V)$, let us write its matrix in the bases $\bfB$ and $\bfC$
as
$$M(v)=\begin{bmatrix}
A(v) & C(v) \\
B(v) & D(v)
\end{bmatrix}$$
along the same pattern.
The mapping $v \in \End(V) \mapsto A(v) \in \Mat_p(\F)$ is linear, and hence continuous. It follows that
$$U:=\{v \in \End(V) : A(v)\in \GL_p(\F)\}$$
is an open subset of $\End(V)$ that contains $u$.

Next, let $v \in U$. Consider the invertible matrix
$$N(v):=\begin{bmatrix}
I_p & -A(v)^{-1}C(v) \\
0_{(n-p) \times p} & I_{n-p}
\end{bmatrix} \in \GL_n(\F),$$
so that
$M(v)N(v)=\begin{bmatrix}
A(v) & 0_{p \times (n-p)} \\
B(v) & ?
\end{bmatrix}$ has the same rank as $M(v)$.
Assume that $v$ has rank $p$. Since $A(v)$ has rank $p$, it follows that the last $n-p$ columns of $M(v)N(v)$ equal zero, and in particular
$M(v)$ annihilates the last column of $N(v)$.

For $v \in U$, denote by $f(v)$ the vector of $V$ whose matrix in $\bfB$ is the last column of
$N(v)$; obviously $f : U \rightarrow V$ is continuous, and the previous study shows that $v[f(v)]=0$
for all $v \in U$ with rank $p$. Finally, $f(u)=e_n=x_0$.
\end{proof}

\begin{Rem}
Set $p:=\rk u$ and define $\End_p(V)$ as the set of all endomorphisms of $V$ with rank $p$, and
$\xi : (u,x)\in \End_p(V) \times V \mapsto u \in \End_p(V)$ as the trivial vector bundle with fiber $V$ and base space $\End_p(V)$. The mapping
$f : (u,x) \mapsto (u,u(x))$ is obviously a $\End_p(V)$-bundle morphism from $\xi$ to itself with constant rank $p$,
therefore its kernel, which equals
$$\begin{cases}
\bigl\{(u,x) \in \End_p(V) \times V : \; u(x)=0\bigr\} & \longrightarrow \End_p(V) \\
(u,x) & \longmapsto u,
\end{cases}$$ is also a vector bundle: see \cite{Husemoller}, Chapter 3 Theorem 8.2.
The above result can then be obtained by using a local trivialization of this bundle.
\end{Rem}

We are now ready to construct the claimed local cross-section.
Consider the endomorphism $\ad_A : M \mapsto AM-MA$ of the vector space $\Mat_n(\F)$.
Denote by $p$ its rank. Applying the above lemma, we find a neighborhood $U$ of $\ad_A$ in
$\End(\Mat_n(\F))$ together with a continuous mapping $f : U \rightarrow \Mat_n(\F)$
such that $f(\ad_A)=I_n$ and $v(f(v))=0$ for all $v \in U$ with rank $p$.
The mapping
$$\Phi : B \in \Mat_n(\F) \mapsto [M \mapsto BM-MA] \in \End(\Mat_n(\F))$$
is affine, and hence continuous:
thus $U_0:=\Phi^{-1}(U)$ is a neighborhood of $A$ in $\Mat_n(\F)$.
We set
$$g : B \in U_0 \cap S(A) \mapsto f(\Phi(B))\in \Mat_n(\F),$$
so that $g(A)=I_n$.
Since $g$ is continuous, $U'_0:=g^{-1}(\GL_n(\F))$ is a neighborhood of $A$ in $S(A)$.

We will conclude the proof by showing that the restriction $g_{|U'_0}$ is a
local cross-section for the mapping $P \in \GL_n(\F) \mapsto PAP^{-1} \in S(A)$.

Let $B \in U'_0$. Since $B \in S(A)$, there is a matrix $Q \in \GL_n(\F)$ such that $B=QAQ^{-1}$.
It follows that $\Phi(B)=L_Q \circ \ad_A \circ L_Q^{-1}$ where $L_N : M \mapsto NM$ for all $N \in \Mat_n(\F)$.
Hence, $\rk \Phi(B)=\rk (\ad_A)=p$. It follows that $\Phi(B)[g(B)]=0$, that is
$Bg(B)=g(B)A$. Moreover, $g(B)$ is invertible, and hence $B=g(B)Ag(B)^{-1}$, as claimed.

\end{document}